\documentclass[10pt]{amsart}
\usepackage{amssymb,amsmath,amsthm,amsfonts,amsopn,url}
\usepackage{amscd,amssymb,amsopn,amsmath,amsthm,graphics,amsfonts,enumerate,verbatim,calc}
\usepackage[all]{xy}
\theoremstyle{plain}
\newtheorem{thm}{Theorem}[section]
\newtheorem*{mt*}{Main Theorem}
\newtheorem*{cj*}{Conjecture}
\newtheorem*{nt*}{Notations}

\newtheorem{prop}[thm]{Proposition}

\newtheorem{cor}{Corollary}
\newtheorem{rem}{Remark}

\theoremstyle{definition}
\newtheorem{definition}{Definition}
\newtheorem{acknowledgement}{Acknowledgement}

\newcommand{\ideal}[1]{\mathfrak{#1}}

\newcommand{\m}{\ideal{m}}

\newcommand{\p}{\ideal{p}}

\newcommand{\fm}{\frak{m}}

\newcommand{\func}[1]{\mathrm{#1} \,}

\newcommand{\im}{\func{im}}
\newcommand{\Hom}{\func{Hom}}

\newcommand{\RR}{{\mathbb R}}
\newcommand{\Ext}{\operatorname{Ext}}
\newcommand{\cl}{\operatorname{cl}}
\newcommand{\id}{\operatorname{id}}

\title[]{when almost Cohen-Macaulay algebras map into big Cohen-Macaulay modules
}

\author[]{Rajsekhar Bhattacharyya
}

\address{Dinabandhu Andrews College, Garia, Kolkata 700084, India}

\email{rbhattacharyya@gmail.com}

\thanks{}

\subjclass[2010]{13C14}

\keywords{Almost Cohen-Macaulay, Big Cohen-Macaulay, Closure operation.}
\begin{document}

\begin{abstract} 
In this paper, we show that almost Cohen-Macaulay algebras are solid. Moreover, we seek for the conditions when (a) an almost Cohen-Macaulay algebra is a phantom extension and (b) when it maps into a balanced big Cohen-Macaulay module.
\end{abstract}

\maketitle

\section{introduction}
Let $(R,\fm)$ be a Noetherian local ring. An $R$-algebra $B$ is defined as big Cohen-Macaulay algebra if some system of parameters of $R$ is a regular sequence on $B$. It is balanced if every system of parameters of $R$ is a regular sequence on $B$. It is a conjecture of Hochster that such algebras exist in general and he proved this for rings that contain a field \cite{Hoc75a}, \cite{Hoc75b}. Recently, in \cite{An}, Andr$\acute{e}$ proved the existence of big Cohen-Macaulay algebras for complete local domain in mixed characteristic $p>0$. 

In \cite{Di1}, a list of seven axioms is proposed to define closure operations for finitely generated modules over a complete local domain $R$. These axioms are also known as "finite axioms" for a closure operation. In a recent work of Dietz \cite{Di2}, these axioms are extended beyond finitely generated modules and we call them as "big axioms". Any closure operation which satisfies these axioms is called Dietz closure. A Dietz closure is powerful enough to produce big Cohen-Macaulay modules. 

We recall the definition of solid algebra and module (see \cite{Ho}) and definitions of phanton extension for a closure operation (see \cite{HH3} and \cite{Di1}). In \cite{Di2}, the notion of phantom extension is extended for closure operation which also satisfies "big axioms" and there it is proved that every phantom extension is solid, but converse is not true.  

Let $R$ be a Noetherian local domain and $R^{+}$ be its integral closure in an algebraic closure of its fraction field. We recall the definition of an almost Cohen-Macaulay algebra $R^{+}$-algebra \cite{Ro} (see Section 2). It is a well-known result due to Hochster that for a complete local domain, any algebra which maps into big Cohen-Macaulay algebra is solid. In this paper, we show that almost Cohen-Macaulay algebras are solid. Moreover, we seek for the conditions when (a) an almost Cohen-Macaulay algebra is a phantom extension and (b) when it maps into a balanced big Cohen-Macaulay module. More precisely: 

\begin{enumerate}
\item[(1)] Over a complete local domain $R$ of mixed characteristic $p>0$, an almost Cohen-Mcaulay $R^+$-algebra $S$, is a solid module. As an immediate consequence of this result, we get that for a complete local domain $R$ with $F$-finite residue field, if $S$ be an almost Cohen-Macaulay $R^+$-algebra, then there always exists a balanced big Cohen-Macaulay module over some complete local domain $R'$ (it may not be a balanced big Cohen-Macaulay $R$-module), such that $R'$ is homomorphic image of $R$ and $S$ is mapped into that balanced big Cohen-Macaulay module (see Theorem 3.1 and Corollary 2).
 
\item[(2)] Over a complete local domain $R$ of mixed characteristic $p>0$, an almost Cohen-Mcaulay $R^+$-algebra $S$ (with certain restrictions on $S$) is a phantom extension of $R$ via the closure defined in Definition 4 (see Section 2) and such an algebra can be modified into a balanced big Cohen-Macaulay module over $R$ (see Theorem 4.1 and Corollary 3).  
\end{enumerate}

%%%%%%%%%%%%%%%%%%%%%%%%%%%%%%%%%%%%%%%%%%%%%%%%%%%%%%%%%%%%%%%%%%%%%%%%%%%%%%%%%%%%%%%%%%%%%%%%%%%%%%%%%%%%%%%%%%%%%%%%%%%%%%%%%%%%%%%%%%%%%%%%%%%%%%%%%%%%%%%%%%%%%%%%%%%%%%%%%%%%%%%%%%%%%%%%%%%%%%%%%%%%%%%%%%%%%%%%%%%%%%%%%%%%%%%%%%%%%%%%%%%%%%%%%%%%%%%%%%%%%%%%%%%%%%%%%%%%%%%%%%%%%%%%%%%%%%%%%%%%%%%%%%%%%%%%%%%%%%%%%%%%%%%%%%%%%%%%%%%%%%%%%%%%%%%%%%%%%%%%%%%%%%%%%%%%%%%%%%%%%%%%%%%%%%%%%%%%%
\section{preliminary results}

Let $U$ be an integral domain equipped with a mapping $v:U\rightarrow \RR\cup\{\infty\}$, such that for all $a, b\in U$:
\begin{center}
(i) $v(ab) = v(a) + v(b);$\newline
(ii) $v(a + b) \geq \min\{v(a), v(b)\};$\newline
(iii) $v(a) = \infty$ if and only if $a= 0$\newline
\end{center}
We shall refer $v$ as a valuation or a value map. If moreover $v(c)\geq 0$ for every $c\in U$ and $v(c) > 0$ for every non-unit $c\in U$, then we say that $v$ is normalized. 

Let $(R,\m)$ be a Noetherian local domain of $\dim d$ and $R^{+}$ be its integral closure in an algebraic closure of its fraction field where $\m^+$ be its unique maximal ideal. Let $R^{+}$ be equipped with a normalized value map. Such normalized value map can arise in the following way: 

Let $K$ be the field of fraction of $R$. For the maximal ideal $\m$, we can get a valuation ring $(R_v, \m_v)$ of $K$ with maximal ideal $\m_v$ such that $R\subset R_v$ and $\m_v\cap R=\m$. Thus we get a valuation $v$ which is positive on $\m$ and zero on the units of $R$. Let $K^+$ be the algebraic closure of $K$. Now consider a valuation on $K^+$ and call it $v$ again, which extends the valuation of $R$ as well as that of $K$. If $(V,\m_V)$ is the valuation ring for $v$ in $K^+$, then from a well known result of valuation theory we get that $m_V\cap R^+= m^+$. Thus valuation on every element in $m^+$ is positive and that for every unit of $R^+$ is zero. So, we get a normalized valuation on $R^+$. This whole construction can be done for any prime ideal $\p$ of $R$ since for everyprime ideal $\p$ of $R$ there exists a valuation ring $(R_{v'},\m_{v'})$ of $K$ such that $R\subset R_{v'}$ and $\m_{v'}\cap R=\p$. 

Consider a local $R^{+}$-algebra $A$. We recall the following definition of a closure operation from first paragraph of Section 5 of \cite{AB}, which extend the notion of dagger closure in \cite{HH}, via a local algebra over $R^+$.

\begin{definition}\label{def:almost closure of an ideal}
Let $A$ be a local $R^{+}$-algebra where $R^{+}$ is equipped with a normalized valuation $v:R^{+}\rightarrow \RR\cup\{\infty\}$ and $M$ be an $A$-module. Consider a submodule $N\subset M$. Let $x\in M$. Then we say whenever $x\in N^{v}_M$ if and only if for every $\epsilon> 0$, there exists $a\in R^{+}$ such that $v(a)< \epsilon$ and $ax\in N$ or sometimes we can say there exists $a\in R^{+}$ with arbitrarily small $v(a)$ and $ax\in N$. A submodule $N$ is called $v$-closed if $N= N^{v}_M$.
\end{definition}

We recall the definition of almost zero module.

\begin{definition}
For a normalized valuation $v$ on $R^+$, we define an $A$-module $M$ as an almost zero module, if for every $m\in M$ and for every $\epsilon>0$ there exists $a\in A$, such that $v(a)<\epsilon$ and $am= 0$. 
\end{definition}

Throughout this work we fix $v$. We recall the definition of almost Cohen-Macaulay $R^{+}$-algebra and almost regular sequence \cite{Ro}. %

\begin{definition}
Let $R^{+}$ be equipped with a valuation $v$. An $R^{+}$-algebra is almost Cohen-Macaulay if for every system of parameters $x_1,\ldots, x_d$, $((x_1,\ldots, x_i):x_{i+1})/(x_1,\ldots, x_i)$ is almost zero $R^{+}$-module for every $i=1\to d-1$ and $A/\m A$ is not almost zero. If we do not put any restriction on $A/\m A$, we call it an weakly almost Cohen-Macaulay \cite{Sh}. In the above case we say $x_1,\ldots,x_d$ forms an almost regular sequence.
\end{definition}

In this context, we recapitulate the definition of a big Cohen-Macaulay Algebra and balanced big Cohen-Macaulay algebra as stated in the begining of the Introduction: Here we give an example of big Cohen-Macaulay algebra which is not balanced (see page 342 of \cite{BH}):  

Let $R=k[[x,y]]$ be a power series ring of two indeterminates  over a field $k$. Consider an $R$-algebra $B=R \times Q$ where $Q$ is the field of fractions of $R/yR$. Here we have the natural $R$-algebra map $R\rightarrow B$ which sends $r\in R$ to $(r, \bar{r})\in B$ where $\bar{r}$ is the image of $r$ in $Q$. Clearly $(x,y)$ forms a $B$-regular sequence but $(y,x)$ does not.  For a balanced big Cohen-Macaulay algebra, since every system of parameters is a regular sequence so is every permutation of system of parameters. Thus $B$ in this example is a big Cohen-Macaulay algebra but it is not balanced big Cohen-Macaulay.

Since regular sequence is a trivial example of almost regular sequence, in this context $B$ also serves as an example of an almost Cohen-Macaulay algebra where not every system of parameters forms an almost regular sequence except some of them. 

We list the following properties of the closure as defined in Definition 1. The proof of all these properties is given in \cite{AB}, here we restate it for completeness. 

\begin{prop}
Let $M$, $M'$ be modules over a local $R^{+}$-algebra $A$ with a normalized valuation $v:R^{+}\rightarrow \RR\cup\{\infty\}$. Consider the arbitrary submodules $N$, $W$ of $M$. Then the following are true:\begin{enumerate}
\item[$\mathrm{(i)}$] $N^{v}_M$ is a submodule of $M$ containing $N$.
\item[$\mathrm{(ii)}$] $(N^{v}_M)^{v}_M= N^{v}_M$.
\item[$\mathrm{(iii)}$] If $N\subset W\subset M$, then $N^{v}_M\subset W^{v}_M$.
\item[$\mathrm{(iv)}$]  Let $f:M\to M'$ be a homomorphism. Then $f(N^{v}_M)\subset f(N)^{v}_{M'}$.
\item[$\mathrm{(v)}$] If $N^{v}_M= N$ then $0^{v}_{M/N}= 0$.
\end{enumerate}
In addition to, if $A$ is weakly almost Cohen-Macaulay then following is true:
\begin{enumerate}
\item[$\mathrm{(vi)}$] Let $x_1, \ldots , x_{k+1}$ be a partial system of parameters for $A$, and let $J =
(x_1, \ldots , x_k)A$. Suppose that there exists a surjective homomorphism $f:M\to
A/J$ such that $f(u) = \bar{x}_{k+1}$, where $\bar{x}$ is the image of $x$ in $A/J$. Then
$(Au)^{v}_M\cap ker f \subset (Ju)^{v}_M$.
\end{enumerate}
\end{prop}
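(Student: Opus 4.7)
The plan is to verify items (i)–(v) directly from Definition \ref{def:almost closure of an ideal} by short $\epsilon$-arguments that use only the additivity of the valuation $v$ under multiplication in $R^+$, and then to invest the real work in (vi), where the weakly almost Cohen-Macaulay hypothesis is essential.

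For (i), the inclusion $N\subset N^{v}_M$ is obtained by taking $a=1$. Closure under scalar multiplication follows from $a(rx)=r(ax)\in N$ whenever $ax\in N$. Closure under addition is the standard doubling trick: given $x,y\in N^{v}_M$ and $\epsilon>0$, pick $a,b\in R^{+}$ with $v(a),v(b)<\epsilon/2$ and $ax,by\in N$; then $ab(x+y)=b(ax)+a(by)\in N$ and $v(ab)<\epsilon$. Part (iii) is immediate from the definition, part (iv) follows from $af(x)=f(ax)\in f(N)$, and part (v) unwinds to the tautology: $\bar x\in 0^{v}_{M/N}$ iff $ax\in N$ for some $a\in R^{+}$ of arbitrarily small valuation, which forces $x\in N^{v}_M=N$. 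For (ii) one inclusion is (i); for the other, given $x\in (N^{v}_M)^{v}_M$ and $\epsilon>0$, first choose $a$ with $v(a)<\epsilon/2$ and $ax\in N^{v}_M$, then choose $b$ with $v(b)<\epsilon/2$ and $b(ax)\in N$, and note $v(ab)<\epsilon$.

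The real content is (vi), where the hypothesis $f(u)=\bar x_{k+1}$ converts membership in $(Au)^{v}_M$ into a colon-ideal computation inside $A$. Let $y\in (Au)^{v}_M\cap \ker f$ and fix $\epsilon>0$. Choose $a\in R^{+}$ with $v(a)<\epsilon/2$ and $ay\in Au$, and write $ay=bu$ with $b\in A$. Applying $f$ gives $0=af(y)=f(ay)=b\bar x_{k+1}$, so $bx_{k+1}\in J$, i.e.\ $b\in (J:_A x_{k+1})$. Because $A$ is weakly almost Cohen-Macaulay, $(J:_A x_{k+1})/J$ is almost zero as an $R^{+}$-module, so the class of $b$ is annihilated by some $c\in R^{+}$ with $v(c)<\epsilon/2$; that is, $cb\in J$. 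Then $(ca)y=c(ay)=(cb)u\in Ju$ and $v(ca)<\epsilon$, so $y\in (Ju)^{v}_M$.

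The main obstacle, and really the only nontrivial point, is (vi); everything else is bookkeeping with $\epsilon$. The delicate part is matching the $\epsilon$ used for $(Au)^{v}_M$ against the $\epsilon$ available from the almost-zero hypothesis on $(J:_A x_{k+1})/J$, which is handled cleanly by splitting $\epsilon = \epsilon/2+\epsilon/2$ and exploiting the additivity of $v$.
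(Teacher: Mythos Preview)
Your proof is correct and follows essentially the same approach as the paper: the $\epsilon/2$ doubling trick for (i) and (ii), the direct verifications for (iii)--(v), and for (vi) the same colon-ideal computation $b\in (J:_A x_{k+1})$ followed by the almost-zero hypothesis to obtain $cb\in J$ and hence $(ca)y\in Ju$.
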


\begin{proof}
\begin{enumerate}
\item[$\mathrm{(i)}$] Clearly $N\subset N_{M}^{v}$.
For $x, y\in N^{v}_M$, take $\epsilon> 0$ and choose $a, b\in R^{+}$ such that $v(a), v(b)< \epsilon/2$ and $ax, by\in N$. Thus we have $v(ab)< \epsilon$ and $ab(x+ y)\in N$. Thus $x+y\in N^{v}_M$. Consider $x\in N^{v}_M$ and $b\in A$. Since there exists $a\in R^{+}$ such that $v(a)< \epsilon$ and $ax\in N$ and since $N$ is a submodule, we find $a(bx)\in N$ and $bx\in N^{v}_M$. Thus it is easy to see that $N^{v}_M$ is a submodule containing $N$.
\item[$\mathrm{(ii)}$] Take $x\in (N^{v}_M)^{v}_M$. For $\epsilon> 0$ and choose $a\in R^{+}$ such that $v(a)< \epsilon/2$ and $ax\in (N^{v}_M)$. Similarly, for $\epsilon> 0$ and choose $b\in R^{+}$ such that $v(b)< \epsilon/2$ and $(ba)x\in N$. Thus we find $ba\in R^{+}$ with $v(ba)< \epsilon$ such that $(ba)x\in N$. So $x\in (N^{v}_M)$, which yields the claim.
\item[$\mathrm{(iii)}$] This is easy and we leave it to reader.
\item[$\mathrm{(iv)}$]  Consider $x\in N^{v}_M$, thus for every $\epsilon> 0$, there exists $a\in R^{+}$ such that $v(a)< \epsilon$ and $ax\in N$. This implies $f(ax)= af(x)$ is in $f(N)$ where $a\in R^{+}$ is of arbitrarily small positive order. So $f(x)\in f(N)^{v}_{M'}$.
\item[$\mathrm{(v)}$] Consider $\bar{x}\in 0^{v}_{M/N}$ which is the image of $x$ in $M/N$. This implies that $ax\in N$ for the element $a\in R^{+}$ of arbitrarily small positive order. So $x\in N^{v}_M= N$ and $\bar{x}\in 0$.
\item[$\mathrm{(vi)}$]Take $x\in (Au)^{v}_M\cap \ker f$. For every $\epsilon >0$ there exists $a\in R^{+}$ of $v(a)<\epsilon/2$ such that $ax= bu\in Au$ and $af(x)=0=bf(u)=b\bar{x}_{k+1}$. This implies $bx_{k+1}\in J$ i.e. $b\in (J:x_{k+1})$. Since $A$ is weakly almost Cohen-Macaulay, for every $\epsilon >0$ there exists $c\in R^{+}$ of $v(c)<\epsilon/2$ such that $cb\in J$. Thus for $\epsilon >0$ there exists $ac\in R^{+}$ of $v(ac)<\epsilon$ and $(ac)x= cbu\in Ju$. So $x\in (Ju)^{v}_M$.
\end{enumerate}
\end{proof}

With the help of the closure as defined in Definition 1, we can define a closure operation for submodules of an arbitrary $R$-module $M$. This definition is similar to that, given in the first paragraph of Section 5 of \cite{AB}, where it was given for finitely generated $R$ modules. 

\begin{definition}
Let $(R,\m)$ be a Noetherian local domain and let $A$ be a local $R^{+}$-algebra and $R^{+}$ is equipped with a normalized valuation $v:R^{+}\rightarrow \RR\cup\{\infty\}$. For any $R$-module $M$ and for its submodule $N$ we define submodule $N_{M}^{\bold{v}}$ such that $x\in N_{M}^{\bold{v}}$ if $1\otimes x \in \im(A\otimes N\to A\otimes M)_{A\otimes M}^{v}$. 
\end{definition}

\begin{rem}\label{2}  
The closure defined in Definition 4 depends not only on the value map but also on the $R^{+}$-algebra $A$. So, from now, we fix an $R^+$-algebra $A$ to perform the closure operation. In defining closure operation (see Definition 1 and Definition 4), we choose a local $R^+$-algebra $A$ (with a local map). This choice ensures that we can always have system of parameters inside the maximal ideal of $A$ and $A/\m A$ is nonzero. This choice is necessary since sometimes we need $A$ to be an almost Cohen-Macaulay algebra. 

In defining closure operation, one can also use $R^+$-algebra $A$ which is not necessarily local, but whenever it is needed, we have to put an extra assumptions. 
\end{rem}

We recall the definition of Dietz closure \cite{Di1}, and also big Axioms of Dietz, see definition 1.1 of \cite{Di2}. We observe the following proposition.

\begin{prop}
Let $(R,\m)$ be a Noetherian local domain and let $R^{+}$ be equipped with a normalized valuation $v:R^{+}\rightarrow \RR\cup\{\infty\}$. Let $M$, $M'$ be arbitrary $R$-modules. Consider the submodules $N$, $W$ of $M$. Then the following are true:
\begin{enumerate}
\item[$\mathrm{(i)}$]  $N^{\bold{v}}_M$ is a submodule of $M$ containing $N$.
\item[$\mathrm{(ii)}$]$(N^{\bold{v}}_M)^{\bold{v}}_M= N^{\bold{v}}_M$.
\item[$\mathrm{(iii)}$] If $N\subset W\subset M$, then $N^{\bold{v}}_M\subset W^{\bold{v}}_M$.
\item[$\mathrm{(iv)}$] Let $f:M\to M'$ be a homomorphism. Then $f(N^{\bold{v}}_M)\subset f(N)^{\bold{v}}_{M'}$.
\item[$\mathrm{(v)}$] 
If $N^{\bold{v}}_M= N$ then $0^{\bold{v}}_{M/N}= 0$.
\item[$\mathrm{(vi)}$] 
If $R^{+}\to A$ is injective and $A/\m A$ is not almost zero then $0^{\bold{v}}_R= 0$ and $m^{\bold{v}}_R= m$. In particular for every proper ideal $I\subset R$, $I^{\bold{v}}$ is proper.
\end{enumerate}
In addition to, if $A$ is weakly almost Cohen-Macaulay then following is true:
\begin{enumerate}
\item[$\mathrm{(vii)}$] Let $x_1, \ldots , x_{k+1}$ be a partial system of parameters for $R$, and let $J =
(x_1, \ldots , x_k)R$. Suppose that there exists a surjective homomorphism $f:M\to
R/J$ such that $f(u) = \bar{x}_{k+1}$, where $\bar{x}$ is the image of $x$ in $R/J$. Then
$(Ru)^{\bold{v}}_M\cap ker f \subset (Ju)^{\bold{v}}_M$.
\end{enumerate}

In other words, if $A$ is an almost Cohen-Macaulay $R^{+}$-algebra where $R^{+}$ is contained in it as a subdomain, then closure operation of Definition 4, satisfies big Axioms of Dietz.
\end{prop}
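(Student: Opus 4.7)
The plan is to reduce parts (i)--(v) to the corresponding statements of the previous Proposition applied to the $A$-module $A\otimes_R M$. Writing $L := \im(A\otimes_R N\to A\otimes_R M)$, part (i) is immediate because $L^v_{A\otimes M}$ is an $A$-submodule of $A\otimes M$, so the set $\{x\in M : x\otimes 1\in L^v_{A\otimes M}\}$ is an $R$-submodule of $M$ containing $N$. Parts (ii) and (iii) are direct. For (iv), the induced map $1_A\otimes f\colon A\otimes M\to A\otimes M'$ carries $L$ into $\im(A\otimes f(N)\to A\otimes M')$, after which part (iv) of the previous Proposition applies. For (v), right exactness of $A\otimes_R -$ identifies $A\otimes(M/N)$ with $(A\otimes M)/L$, and the hypothesis $N^{\bold{v}}_M = N$ forces $L^v_{A\otimes M} = L$, so part (v) of the previous Proposition gives $0^v_{(A\otimes M)/L}=0$, i.e.\ $0^{\bold{v}}_{M/N}=0$.

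For (vi), I would argue directly from the definition. If $r\in 0^{\bold{v}}_R$, then $r\otimes 1 \in 0^v_A$, so for every $\epsilon>0$ there is $a\in R^+$ with $v(a)<\epsilon$ and $ar = 0$ in $A$; injectivity of $R^+\hookrightarrow A$ together with $R^+$ being a domain (and $a\neq 0$) force $r=0$. For $\m^{\bold{v}}=\m$, suppose $r\in R\setminus\m$; then $r\otimes 1$ is a unit in $A$, so $r\in \m^{\bold{v}}$ would imply that the image of $1$ in $A/\m A$ is annihilated by elements of $R^+$ of arbitrarily small positive $v$-value, making $A/\m A$ almost zero and contradicting the hypothesis. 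Properness of the closure of every proper ideal of $R$ then follows from part (iii).

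For (vii), the plan is to apply part (vi) of the previous Proposition to the surjection $\bar f := 1_A\otimes f\colon A\otimes M\to A\otimes(R/J)\cong A/JA$, which sends $u\otimes 1$ to $\bar x_{k+1}\in A/JA$. Given $x\in (Ru)^{\bold{v}}_M\cap \ker f$, we obtain $x\otimes 1\in (A(u\otimes 1))^v_{A\otimes M}$ and $\bar f(x\otimes 1)=0$; the cited result then yields $x\otimes 1\in (JA\cdot(u\otimes 1))^v_{A\otimes M} = \im(A\otimes Ju \to A\otimes M)^v_{A\otimes M}$, so $x\in (Ju)^{\bold{v}}_M$. The main technical point to verify is that $x_1,\ldots,x_{k+1}$ remains a partial system of parameters of $A$, which is built into the almost Cohen-Macaulay hypothesis (forcing $\dim A = \dim R = d$), together with the natural identifications $A\otimes_R Ru \cong A\cdot(u\otimes 1)$ and $A\otimes_R Ju \cong JA\cdot(u\otimes 1)$ coming from right-exactness of $A\otimes_R -$.
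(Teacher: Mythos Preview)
Your overall strategy of reducing to the earlier Proposition (for $A$-modules) is sound and slightly more systematic than the paper's approach, which re-proves most parts by direct manipulation of the definition. In particular your treatments of (i), (iii), (iv), (vi), and (vii) are correct; for (vii) the paper unwinds the argument by hand (extracting $a$ and $c$ with $v(a),v(c)<\epsilon/2$ and showing $\sum r_ia_i\in(JA:x_{k+1})\subset(JA)^v$), whereas your reduction to part (vi) of the earlier Proposition packages the same computation more cleanly.

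There is, however, a genuine gap in your proof of (v). You assert that the hypothesis $N^{\bold{v}}_M=N$ forces $L^{v}_{A\otimes M}=L$, where $L=\im(A\otimes N\to A\otimes M)$, and then invoke part (v) of the earlier Proposition. That implication is not justified and is in general false: the condition $N^{\bold{v}}_M=N$ only says that those elements of $L^{v}$ of the special form $1\otimes x$ (with $x\in M$) already lie in $L$; it says nothing about general elements of $A\otimes M$. For instance, with $M=R$ and $N=0$ one has $L=0$ and $L^{v}=0^{v}_A$, and $A$ may well contain nonzero almost-zero elements even when $R^{+}\hookrightarrow A$ and $A/\m A$ is not almost zero; yet $0^{\bold{v}}_R=0$ in that situation by your own argument in (vi). The fix is exactly what the paper does: argue directly. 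Using $A\otimes(M/N)\cong(A\otimes M)/L$, one has $\bar{x}\in 0^{\bold{v}}_{M/N}$ iff $1\otimes x\in L^{v}_{A\otimes M}$, i.e.\ iff $x\in N^{\bold{v}}_M=N$, whence $\bar{x}=0$. No appeal to $L^{v}=L$ is needed.

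A minor remark on (ii): calling it ``direct'' hides the one nontrivial step, namely that $\im(A\otimes N^{\bold{v}}_M\to A\otimes M)\subset L^{v}_{A\otimes M}$ (since each $1\otimes x_i$ with $x_i\in N^{\bold{v}}_M$ lies in $L^{v}$ and $L^{v}$ is an $A$-submodule), after which idempotence of the $A$-level closure finishes. The paper instead writes out the $\epsilon/2$ argument explicitly.
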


\begin{proof}
\begin{enumerate}
\item[$\mathrm{(i)}$]  Clearly $N\subset N_{M}^{\bold{v}}$. To prove $ N_{M}^{\bold{v}}$ is a submodule, let $x, y\in N_{M}^{\bold{v}}$ and $r\in R$. Then $$x\otimes 1, y\otimes 1 \in \im(A\otimes N\to A\otimes M)_{A\otimes M}^{v}.$$ Thus $(x+y)\otimes 1, rx\otimes 1 \in \im(A\otimes N\to A\otimes M)_{A\otimes M}^{v}$. These yield that $x+y, rx\in N_{M}^{\bold{v}}$.
\item[$\mathrm{(ii)}$] Let $x\in (N^{\bold{v}}_M)^{\bold{v}}_M$. This implies $x\otimes 1 \in \im(A\otimes N_{M}^{\bold{v}}\to A\otimes M)_{A\otimes M}^{v}$. Equivalently, for $\epsilon >0$ there exists $a\in R^{+}$ such that $v(a)< \epsilon/2$ and $a\otimes x= \sum_{i=1}^{l}a_i\otimes x_i$ where $x_i\in N^{\bold{v}}_M$ and  $a_i\in A$. Choose $c_i \in R^{+}$ such that for every $i$, $v(c_i)< \epsilon/2l$ and $x_i\otimes c_i \in \im(A\otimes N\to A\otimes M)$. Take $c=\prod_{i=1}^{l}c_i$. Thus $x\otimes ac \in \im(A\otimes N\to A\otimes M)$ and $x\in (N^{\bold{v}}_M)$.
\item[$\mathrm{(iii)}$] This is trivial.
\item[$\mathrm{(iv)}$] Let $x\in N^{\bold{v}}_M$ and this gives $a\otimes x= \sum_{i=1}^{l}a_i\otimes x_i$ for element $a\in R^{+}$ of arbitrarily small order, where $a_i\in A$ and $x_i\in N$. Applying $f\otimes1_A$, we get $a\otimes f(x)= \sum_{i=1}^{l}a_i\otimes f(x_i)$ and this finishes the proof.
\item[$\mathrm{(v)}$] Denote  the image of $x\in M$ in $M/N$ by $\bar{x}$. If $\bar{x}\in 0^{\bold{v}}_{M/N}$ then $a\otimes \bar{x}\in 0$ and this implies $a\otimes x\in \im(A\otimes N\to A\otimes M)$ for element $a\in R^{+}$ of arbitrarily small order. Thus $x\in N^{\bold{v}}_M= N$ and $\bar{x}\in 0$.
\item[$\mathrm{(vi)}$]
Due to (e) of Lemma 1.3 of \cite{Di2}, the condition $0^{\bold{v}}= 0$ follows. For $m^{\bold{v}}= m$, suppose on the contrary that $s\in \fm^{\bold{v}}$ for some unit element $s$. It turns out that $sc\in \fm A$ for every element $c$ of arbitrarily small order, i.e.,  $\fm A$ contain elements of arbitrarily small order, since $s$ is a unit. This provides a contradiction, because $A/mA$ is not almost zero.
\item[$\mathrm{(vii)}$] Take $x\in (Ru)^{\bold{v}}_M\cap \ker f$. For every $\epsilon >0$ there exists $a\in R^{+}$ of $v(a)<\epsilon/2$ such that $a\otimes x= \sum_{i=1}^{n}a_i\otimes r_i u$. Since $f(x)=0$, $f(x)\otimes a= x_{k+1}(\sum_{i=1}^{n}r_ia_i)+JA=0$. This implies $\sum_{i=1}^{n}r_ia_i\in (JA:x_{k+1}A)\subset (JA)^{v}$. So there exists $c\in R^{+}$ with $v(c)< \epsilon/2$ such that $c\sum_{i=1}^{n}r_ia_i\in JA$. Thus $$ac\otimes x= u\sum_{i=1}^{n}r_ia_i\otimes c\in \im(JA\otimes Ru \to A\otimes M)$$ and this gives $ac\otimes x \in \im(A\otimes Ju \to A\otimes M)$ with $v(ac)< \epsilon$. So, we finish the proof.
\end{enumerate}
\end{proof}

In Proposition 1.4 of \cite{Di1}, we have the definition of colon capturing property of a closure. Here in (vii) of Propositiom 2.2 we observe that $A$ has colon capturing property. Now, as a consequence of above proposition we have the following corollary where we observe that the condition $0^{\bold{v}}= 0$ is equivalent to the condition that $R^+$ should be inside of $A$ as a subdomain. 

\begin{cor}
Let $(R,m)$ be a Noetherian local domain and let $R^{+}$ be equipped with a valuation $v$. Consider a local $R^{+}$-algebra $A$ and both the closure operations of Definition 1 and Definition 4. Then, closure operations satisfy all the properties given in Proposition 2.1 and Proposition 2.2 if and only if $A$ is an almost Cohen-Macaulay $R^{+}$-algebra where $R^{+}$ is contained in it as a subdomain.
\end{cor}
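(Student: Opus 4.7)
The statement is an equivalence, so I would split it into two directions. The ``if'' direction ($\Leftarrow$) is essentially a repackaging of Proposition 2.2: if $A$ is almost Cohen--Macaulay and $R^+$ sits inside $A$ as a subdomain, then injectivity of $R^+\to A$, non-almost-vanishing of $A/\m A$, and the weakly almost Cohen--Macaulay condition --- precisely the hypotheses appearing in (vi) and (vii) of Proposition 2.2 --- are all in force, so the corresponding conclusions follow, while (i)--(v) require no hypotheses on $A$.

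For the ``only if'' direction ($\Rightarrow$), the crucial new observation, highlighted by the author before the statement, is the equivalence $0^{\bold{v}}_R = 0 \iff R^+\hookrightarrow A$. One direction is easy: if $R^+$ embeds in $A$, any $x\in R$ with $ax=0$ in $A$ for some nonzero $a\in R^+$ must already satisfy $ax=0$ in the domain $R^+$, forcing $x=0$. The converse is the substantive part. Suppose for contradiction that some $y\in R^+$ is nonzero but maps to $0$ in $A$. Since $y$ is integral over $R$, it satisfies a monic relation $y^n+c_{n-1}y^{n-1}+\cdots+c_0=0$ with $c_i\in R$; by iteratively factoring out $y$ (valid because $R^+$ is a domain), one may assume $c_0\in R\setminus\{0\}$. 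Reducing this relation modulo the kernel of $R^+\to A$ kills every term containing $y$, forcing $c_0=0$ in $A$. Then $a=1$, of valuation $0<\epsilon$ for any $\epsilon>0$, certifies $c_0\in 0^{\bold v}_R$, contradicting $0^{\bold v}=0$.

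The companion equivalence $\m^{\bold{v}}=\m \iff A/\m A$ is not almost zero is handled by a parallel unwinding: $1\in \m^{\bold v}_R$ is exactly the statement that $\m A$ contains images of $R^+$-elements of arbitrarily small positive valuation, which is the condition that $\bar 1\in A/\m A$ (and hence the whole quotient) is almost zero. To recover the weakly almost Cohen--Macaulay property of $A$ from (vii), one specializes the test $R$-module: for a partial system of parameters $x_1,\ldots,x_{k+1}$ of $R$ with $J=(x_1,\ldots,x_k)R$ and a candidate $r\in A$ with $rx_{k+1}\in JA$, one constructs an $R$-module $M$ together with a generator $u$ and a surjection $f\colon M\to R/J$ satisfying $f(u)=\bar x_{k+1}$, so that the colon relation in $A$ is encoded as membership in $(Ru)^{\bold v}_M\cap\ker f$; then (vii) lands the image in $(Ju)^{\bold v}_M$, which unwinds to the desired almost-zero multiplier in $A$.

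The main obstacle I anticipate is this last translation from $R$-level closure capture to $A$-level colon condition: because $A$ is generally not finitely generated over $R$, the test module has to be tailored element-by-element, and one must carefully unwind the tensor-product definition of $\bold v$-closure to confirm that (vii) genuinely detects the $A$-level almost-zero statement. Assembling the three equivalences then yields the corollary.
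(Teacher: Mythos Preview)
Your proposal follows the paper's proof: the paper likewise takes the ``if'' direction directly from Proposition~2.2, derives $R^+\hookrightarrow A$ from $0^{\bold v}=0$ via the same minimal-integral-equation trick you describe, and reads off that $\m^{\bold v}=\m$ forces $A/\m A$ not almost zero. For the recovery of the almost Cohen--Macaulay property from (vii) the paper is in fact \emph{terser} than you---it writes only ``since this closure captures colon in $A$, we can conclude''---so the step you correctly flag as the main obstacle is simply asserted there rather than argued.
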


\begin{proof}
From Proposition 2.1 and Proposition 2.2 `if' follows. Here, we prove `only if'. Proof of (vi) of Proposition 2.2 implies that if $m^{\bold{v}}= m$ then $A/mA$ is not almost zero. Moreover, $0^{\bold{v}}= 0$ also implies $R^{+}\to A$ is injective: take $0\neq a\in R^{+}$ such that its image in $A$ is zero. Since $a$ is integral over $R$, there exists a minimal monic expression $a^n+ r_1a^{n-1}+\ldots +r_n= 0$, where each $r_i\in R$ with $r_n\neq 0$. Take the image of the expression in $A$ which gives image of $r_n$ is zero in $A$. So $r_n\in 0^{\bold{v}}= 0$. This gives $r_n= 0$. So we arrive at a contradiction and $a$ is zero in $R^{+}$. 

Finally, let $x_1, \ldots , x_{k+1}$ be a part system of parameters for $A$, and let $J =(x_1, \ldots , x_k)A$. Consider (vi) of Proposition 2.1 and we choose $M$, $u\in M$ there, as $A$ and $1\in A$ here. Moreover, we consider the surjective $A$-linear map $f:A\to
A/J$ such that $f(1) = \bar{x}_{k+1}$, where $\bar{x}$ is the image of $x$ in $A/J$. Now $x\in (J:x_{k+1})$ if and only if $x\in ker f$. Since $A^{{v}}_A=A$, from the result of (vi) of Proposition 2.1, we get $x\in (J)^{{v}}_A$. Thus, we finish the proof of this corollary.
\end{proof}

We conclude the section with a brief exposition of phantom extension (see \cite{HH3}, \cite{Di1} and \cite{Di2}). Let $R$ be a ring with a closure operation $\cl$, $M$ an arbitrary $R$-module, and $\alpha:R\rightarrow M$ an injective map with cokernel $Q$. We have a short exact sequence 

\[\begin{CD}
0 @>>> R @>{\alpha}>> M @>>> Q @>>> {0.}
\end{CD}\]

Let $P_\bullet$ be a projective resolution (equivalently, free resolution since $R$ is local) for $Q$ over $R$. Then, this yields the following commutative diagram 

\[\begin{CD}
0 @>>> R @>{\alpha}>> M @>>> Q @>>> 0 \\
@AAA @AA{\phi}A @AAA @AA{\id}A @. \\
P_2 @>>> P_1 @>{d}>> P_0 @>>> Q @>>> 0. \\
\end{CD}\]

Let $\epsilon \in \Ext_R^1(Q,R)$ be the element corresponding to this short exact sequence via the Yoneda correspondence. We say that $M$ (more precisely $\alpha:R\rightarrow M$) is a phantom extension via closure operation $\cl$, if for above projective resolution $P_\bullet$ of $Q$, a cocycle representing $\epsilon$ in $\Hom_R(P_1,R)$ is in $\im (\Hom_R(P_0,R)\rightarrow \Hom_R(P_1,R))_{\Hom_R(P_1,R)}^{\cl}$. 

\section{almost cohen-macaulay algebras are solid}

We recall the definition of solid module and algebra \cite{Ho}. It is a well-known result due to Hochster (see, Corollary 10.6 of \cite{Ho}) that for a complete local domain, any algebra which maps into big Cohen-Macaulay algebra is solid. Similar is true for almost Cohen-Macaulay algebra.

\begin{thm}
(a) Let $R$ be a complete local domain of mixed characteristic $p>0$ and $S$ be an almost Cohen-Macaulay algebra. Then $S$ is a solid $R$-algebra.\\
(b) In particular, any algebra which maps into an almost Cohen-Macaulay algebra is solid.\\
(c) Let $R^+ = \lim_{\rightarrow} R_\alpha$ be a direct limit of Noetherian local rings $R_\alpha$, where each $R_\alpha$ is module finite over $R$. If $S$ is an almost Cohen-Macaulay $R^+$-algebra, then $S$ is a solid $R_\alpha$-algebra for each $\alpha$.
\end{thm}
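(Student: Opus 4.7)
The plan adapts Hochster's proof that big Cohen-Macaulay algebras over complete local domains are solid to the almost setting. The central fact I will invoke is the standard characterization of solidity: for a complete local domain $(R,\m)$ of dimension $d$ with a fixed system of parameters $x_1,\ldots,x_d$, an $R$-algebra $S$ is solid if and only if the class of $1$ in the direct limit $\varinjlim_t S/(x_1^t,\ldots,x_d^t)S$ (the top local cohomology module) is nonzero; equivalently, no integer $N\ge 0$ satisfies $(x_1\cdots x_d)^N \in (x_1^{N+1},\ldots,x_d^{N+1})S$.

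For part (a), I argue by contradiction. Suppose $(x_1\cdots x_d)^N = \sum_{i=1}^d a_i\, x_i^{N+1}$ in $S$. Since $x_1^{N+1},\ldots,x_d^{N+1}$ is again a system of parameters of $R$, the almost Cohen-Macaulay hypothesis says this sequence is almost regular on $S$. Reducing the relation modulo $(x_1^{N+1},\ldots,x_{d-1}^{N+1})S$ isolates $x_d^N(x_1\cdots x_{d-1})^N \equiv a_d\, x_d^{N+1}$; multiplying by $x_d$ and invoking the almost regularity of $x_d^{N+1}$ modulo the preceding ideal yields a $c_d\in R^{+}$ of arbitrarily small valuation pushing $(x_1\cdots x_{d-1})^N$ into the enlarged ideal $(x_1^{N+1},\ldots,x_{d-1}^{N+1},x_d)S$ after multiplication by $c_d$. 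Iterating this stripping --- each round introducing one more $x_i$ in place of a power, at the cost of multiplying by a new small element $c_i\in R^{+}$ --- produces after $d$ steps an element $c=c_1\cdots c_d$ with $v(c)$ arbitrarily small and $c\cdot 1\in (x_1,\ldots,x_d)S\subset\m S$. This forces $S/\m S$ to be almost zero, contradicting the definition of almost Cohen-Macaulay.

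For part (b), given an $R$-algebra map $\psi:T\to S$ and a nonzero $R$-linear map $\phi:S\to R$ from (a), pick $s_0\in S$ with $\phi(s_0)\ne 0$ and replace $\phi$ by $\phi'(s):=\phi(s_0 s)$, which satisfies $\phi'(1_S)\ne 0$. Then $\phi'\circ\psi:T\to R$ sends $1_T\mapsto \phi'(1_S)\ne 0$ and is therefore a nonzero $R$-linear map, so $T$ is solid. For part (c), each $R_\alpha$ is itself a complete local domain (module finite over the complete local domain $R$ and contained in $R^{+}$), and the integral closure of $R_\alpha$ in an algebraic closure of its fraction field is again $R^{+}$. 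A system of parameters of $R$ is also one of $R_\alpha$, so the colon-almost-zero condition for $S$ transfers to $R_\alpha$'s parameters. Moreover $\m_\alpha^n\subset \m R_\alpha$ for some $n$, so if $c\cdot 1\in \m_\alpha S$ with $v(c)$ small then $c^n\cdot 1\in \m S$ with $v(c^n)$ still small; hence $S/\m S$ not almost zero forces $S/\m_\alpha S$ not almost zero. The hypotheses of part (a) therefore hold for $R_\alpha$, and its conclusion finishes the proof.

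The main obstacle will be the iterative step in part (a): one must verify that the accumulated product $c=c_1\cdots c_d$ of small elements still has arbitrarily small valuation (arranged by choosing each $v(c_i)<\epsilon/d$), and crucially one must apply the almost Cohen-Macaulay hypothesis to systems of parameters of the form $x_1^{a_1},\ldots,x_d^{a_d}$ with various rearrangements, which relies on the definition demanding the colon condition for \emph{every} system of parameters rather than merely a single one.
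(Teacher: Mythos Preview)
Your argument for (a) matches the paper's exactly: assume $(x_1\cdots x_d)^t\in(x_1^{t+1},\ldots,x_d^{t+1})S$, peel off one variable at a time using almost regularity at the cost of a factor $c_i\in R^+$ with $v(c_i)<\epsilon/d$, and arrive at $c_1\cdots c_d\in\m S$, contradicting that $S/\m S$ is not almost zero; the paper then invokes Hochster's criterion (Corollary~2.4 of \cite{Ho}) that $H^d_{\m}(S)\ne 0$ implies solidity over a complete local domain. For (c) the paper is slightly more direct than you: rather than re-establishing the almost Cohen--Macaulay hypotheses over $R_\alpha$, it simply notes that $\m R_\alpha$ and $\m_\alpha$ have the same radical, whence $H^d_{\m_\alpha S}(S)=H^d_{\m S}(S)\ne 0$ already from (a), and Hochster's criterion applies to $R_\alpha$.
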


\begin{proof}
(a) Let $(R,\m)$ be a complete local domain of dimension $d$ and $S$ be an almost Cohen-Macaulay $R$-algebra. Let $H^d_{\m S} (S)$ be the local cohomology module in the support of $\m$ and we can view it as $\lim_{\rightarrow} S/(x^t_1,\ldots,x^t_d)S$ with multiplication map $"x_1\ldots x_d"$. We would like to show $\lim_{\rightarrow} S/(x^t_1,\ldots,x^t_d)S\neq 0$. From definition $S/(x_1,\ldots,x_d)S$ is nonzero. If $\lim_{\rightarrow} S/(x^t_1,\ldots,x^t_d)S= 0$, then image of 1 in some $S/(x^{t+1}_1,\ldots,x^{t+1}_d)S$ will be zero. This implies $x^t_1\ldots x^t_d\in (x^{t+1}_1,\ldots,x^{t+1}_d)S$. Set $x^t_1\ldots x^t_d= s_1x^{t+1}_1+\ldots+ s_dx^{t+1}_d$. This gives $x^t_1(x^t_2 \ldots x^t_d- s_1x_1) \in (x^{t+1}_2,\ldots,x^{t+1}_d)S$. Since $S$ is almost Cohen-Macaulay, $x^t_1, x^{t+1}_2\ldots,x^{t+1}_d$ are almost regular sequence. So, for $\epsilon/d>0$ there exists $b_1\in R^+$ such that $v(b_1)< \epsilon/d$ and $b_1(x^t_2 \ldots x^t_d- s_1x_1) \in (x^{t+1}_2,\ldots,x^{t+1}_d)S$ and this yields $b_1(x^t_2 \ldots x^t_d) \in (x_1, x^{t+1}_2,\ldots,x^{t+1}_d)S$. Now, we repeat the process for $x_2$ and in a similar way for $\epsilon/d>0$ we can get $b_2\in R^+$ such that $v(b_2)< \epsilon/d$ and $b_1b_2(x^t_3 \ldots x^t_d) \in (x_1, x_2, x^{t+1}_3,\ldots,x^{t+1}_d)S$. Thus, repeating this process for $d$ times we get $b= b_1\ldots b_d\in (x_1,\ldots,x_d)S$ with $v(d)<\epsilon$ and we get that $(x_1,\ldots,x_d)S$ as well as $\m S$ contains elements of arbitrarily small order. This implies that $S/\m S$ is almost zero. This is a contradiction and thus we prove $H^d_{\m S} (S)$ is non-zero. Now using Corollary 2.4 of \cite{Ho} we can prove that $S$ is a solid $R$-algebra. 

(b) This is straight forward. 

(c) Since for each index $\alpha$, maximal ideal $\m_\alpha$ and $\m R_\alpha$ are $m_\alpha$-primary, $H^d_{\m S} (S)= H^d_{\m_\alpha S} (S)$ and the assertion follows.
\end{proof}

%%%%%%%%%%%%%%%%%%%%%%%%%%%%%%%%%%%%%%%%%%%%%%%%%%%%%%%%%%%%%%%%%%%%%%%%%%%%%%%%%%%%%%%%%%%%%%%%%%%%%%%%%%%%%%%%%%%%%%%%%%%%%%%%%%%%%%%%%%%%%%%%%%%%%%%%%%%%%%%%%%%%%%%%%%%%%%%%%%%%%%%%%%%%%%%%%%%%%%%%%%%%%%%%%%%%%%%%%%%%%%%%%%%%%%%%%%%%%%%%%%%%%%%%%%%%%%%%%%%%%%%%%%%%%%%%%
\section{almost cohen-macaulay algebras map into balanced big cohen-macaulay modules}

In this section, we study when an almost Cohen-Macaulay algebra maps to balanced big Cohen-Macaulay module. We begin the section with the following corollary which is a consequence of Theorem 3.1 of the previous section.

\begin{cor}
Let $R$ be a complete local domain with $F$-finite residue field and $S$ be an almost Cohen-Macaulay $R^+$-algebra. Then, there always exists a balanced big Cohen-Macaulay module over some complete local domain $R'$ (it may not be a balanced big Cohen-Macaulay $R$-module), such that $R'$ is homomorphic image of $R$ and $S$ is mapped into that balanced big Cohen-Macaulay module.
\end{cor}

\begin{proof}
From Theorem 3.1, we have that $S$ is solid. Passing to the ring $R/pR$, we have $S/pS$ is solid as $R/pR$-algebra, which can be also viewed as $(R/pR)^+$-algebra. Being complete, $R/pR$ is $F$-finite, since from the hypothesis, residue field of $R/pR$, which is also the residue field of $R$, is $F$-finite. Now using Corollary 4.9 of \cite{Di2}, $S/pS$ can be modified through a sequence of maps into a balanced big Cohen-Macaulay $R/pR$-module. So $S$ is mapped into that module. 
\end{proof}

\begin{rem}
For positive integers $i$, consider the sets $\{\epsilon_i: \epsilon_i\in \RR\}$ and $\{c_i: c_i\in R^+\}$ such that $\lim_{i\rightarrow\infty} \epsilon_i= 0$ and $v(c_i)< \epsilon_i$. Let $R^+ = \lim_{\rightarrow} R_\alpha$ be a direct limit of Noetherian local rings $R_\alpha$, where each $R_\alpha$ is module finite over $R$. If $c_i\in R_\alpha$ for some $\alpha$, then we call that $R_\alpha$ as $R_i$ and it is always possible to choose $R_i$'s from the directed set of Noetherian local rings such that for $i< j$, $R_i\subset R_j$. From above Proposition 3.1, it is clear that for every positive integer $i$, almost Cohen-Macaulay algebra $S$ is a solid $R_i$ algebra. 
\end{rem}

We propose a special kind of almost Cohen-Macaulay algebra.

\begin{definition}
Consider the situation of Remark 2. We call an almost Cohen-Maculay $R^+$-algebra $S$ satisfies (*) if for every positive integer $i$, there exists an $R$-homomorphism $h_i$ from $S$ to $R_i$, such that it sends 1 to $c_i$. 
\end{definition}

\begin{thm} 
Let $(R,\m)$ be a local domain of mixed characteristic $p>0$, and let $S$ be an almost Cohen-Macaulay algebra which satisfies (*). Then $S$ is a phantom extension of $R$ via the closure defined in Definition 4.
\end{thm}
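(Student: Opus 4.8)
The plan is to unwind the definition of phantom extension and produce the required factorization data using the maps $h_i$ guaranteed by hypothesis $(*)$. Recall that $S$ being a phantom extension of $R$ means that the map $R \to S$, viewed as an element of $\Ext^1_R(C, R)$ where $C = \Coker(R \to S)$ (after first reducing to the case where $R \to S$ is injective, which holds here since $R$ is a domain and $S$ contains $R^+$ as a subdomain, so in particular $1 \notin 0^{\bold{v}}$), becomes "phantom" — i.e., lies in the appropriate closure of the zero submodule of $\Ext^1_R(C,R)$. Equivalently, following Dietz, one shows that for a free resolution $\cdots \to G_1 \to G_0 \to C \to 0$ with the structural map encoded by a cocycle, the relevant element of the cohomology is killed after tensoring with the closure operation. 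So the first step is to set up this homological framework precisely, using the version of phantom extension from \cite{Di2} that is compatible with the big axioms.

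Next, I would exploit the following standard reformulation: $R \to S$ is a phantom extension if and only if there is an $R$-module map (or a family of approximate maps) $S \to R$ compatible, up to the closure operation, with the identity on $R$. Concretely, the hypothesis $(*)$ gives, for each $i$, an $R$-homomorphism $h_i : S \to R_i \subset R^+$ sending $1$ to $c_i$ with $v(c_i) < \epsilon_i$ and $\epsilon_i \to 0$. Thus the composite $R \to S \xrightarrow{h_i} R^+$ is multiplication by $c_i$. Since the $c_i$ have arbitrarily small order, this is exactly the statement that the "splitting obstruction" for $R \to S$ becomes almost zero after applying $A \otimes_R -$; the family $\{h_i\}$ witnesses that the image of the canonical element in $A \otimes_R \Ext^1_R(C, R)$ lies in $0^{v}$. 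I would then translate this into the language of Definition 2: the cocycle representing $R \to S$ lands in $\im(A \otimes N \to A \otimes M)^{v}_{A \otimes M}$ for the appropriate $N \subset M$ coming from the free resolution of $C$.

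The main obstacle I expect is the bookkeeping needed to pass from the module-level maps $h_i$ to the cocycle/Ext-level statement — that is, verifying that "a compatible family of approximate retractions $S \to R_i$" really does force the Ext class to be in the $\bold{v}$-closure of zero, and doing so with the big (rather than finite) axioms, since $S$ and $C$ are not finitely generated. One must be careful that the closure operation of Definition 2 is being applied to a submodule of a possibly infinitely generated free module, and that the direct limit $R^+ = \lim_{\rightarrow} R_\alpha$ interacts well with $\Ext$ and with the valuation; here Proposition 2.2 (especially parts (iv) and (vii)) and the fact, established in Proposition 2.2(vi) and Corollary, that $0^{\bold{v}} = 0$, should do the work, together with Theorem 3.1 which already tells us $S$ is solid over each $R_i$. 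A secondary technical point is reducing to the injective case for $R \to S$ and checking that the modification does not disturb hypothesis $(*)$. Once the homological translation is in place, the estimate $v(c_i) \to 0$ plugs directly into Definition 1 to conclude that the extension class is phantom.
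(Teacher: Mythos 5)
Your plan is essentially the paper's own argument: reduce to the injective case $0\to R\to S\to Q\to 0$, use the maps $h_i$ from $(*)$ (whose composites with $R\to S$ are multiplication by $c_i$ with $v(c_i)<\epsilon_i\to 0$) as approximate retractions, and push this through a free resolution of $Q$ to show the lifted map $f$ lies in $(\im(\nu^V))^{\bold{v}}_{G^V}$. The technical point you flag about non-finitely-generated modules is exactly where the paper invokes Lemma 4.3 of \cite{Di2} to pass from $\Hom_{R_n}(R_n\otimes_R F,R_n)$ to $R_n\otimes_R\Hom_R(F,R)$ before base-changing to $A$, so your outline matches the published proof step for step.
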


\begin{proof}
From Theorem 3.1, $S$ is a solid $R$-algebra. Since $R$ is a domain, $R/I$ can not be a solid algebra for any proper ideal $I$ and so $\beta: R\rightarrow S$ must be an injection. %So, we have a short exact sequence $0\rightarrow R\rightarrow S\rightarrow Q\rightarrow 0$. 
Thus, we have the following diagram, where bottom row is the projective (equivalently free) resolution of $Q$:

\[
\CD
0 @>>>R@>\beta>>S@>>>Q@>>>0 @.\\
@. @A{f} AA @AA{g} A @AAI A @. \\
\ldots @>>>G@>\nu>>F@>>>Q@>>>0 @.
\endCD
\]

Consider the sets $\{\epsilon_i: \epsilon_i\in \RR\}$ and $\{c_i: c_i\in R^+\}$ such that $\lim_{i\rightarrow\infty} \epsilon_i= 0$ and $v(c_i)< \epsilon_i$. Now for every integer $n$, consider $R$ algebra $R_n$, which is module finite over $R$ and tensoring with last short exact sequence, again we get a short exact sequence $0\rightarrow R_n\rightarrow R_n\otimes_R S\rightarrow R_n\otimes_R Q\rightarrow 0$. Since by Proposition 2.1 (e) of \cite{Ho}, $R_n\otimes_R S$ is solid over $R_n$ and for any proper ideal $J\subset R_n$, $R_n/J$ is not solid.

According to the hypothesis, for every positive integer $n$, let $h_n : S\rightarrow R_n$ be the $R$-linear map such that $h_n (1)= c_n\neq 0$, with $v(c_n)<\epsilon_n$. Thus, we can define an $R$-linear map $\gamma_n: R_n \otimes S\rightarrow R_n$, given by $\gamma_n (r'\otimes s) = r'h_n (s)$. Clearly, $\gamma_n (r'\otimes 1) = r'c_n$

So, for every $\epsilon > 0$, there exists an element $c_n\in R^+$, such that $v(c_n)< \epsilon_n <\epsilon$ and we have the following diagram 
\[
\CD
0 @>>>R_n@>1\otimes \beta>>R_n\otimes_R S@>>>R_n\otimes_R Q@>>>0 @.\\
@. @A{1\otimes f} AA @AA{1\otimes g} A @AAI A @. \\
\ldots @>>>R_n\otimes_R G@>1\otimes\nu>>R_n\otimes_R F@>>>R_n\otimes_R Q@>>>0 @.
\endCD
\]

Set $\Hom_R(F,R)= F^V$ and $Im(\Hom_{R} (F,R)\rightarrow \Hom_{R} (G,R))= Im (\nu^V)$. Now, $c_n (1\otimes f) = \gamma_n\circ (1\otimes \beta)\circ (1\otimes f) = \gamma_n\circ(1\otimes g)\circ(1\otimes \nu) = (1\otimes \nu)^{V}(\gamma_n \circ(1\otimes g))$
and so $c_n (1\otimes f)\in Im((1\otimes \nu)^{V})$ for all $n$. In other words, $c_n(1\otimes f) \in Im(\Hom_{R_n} (R_n\otimes_R F, R_n)\rightarrow \Hom_{R_n}(R_n\otimes_R G, R_n))$

By Lemma 4.3 of \cite{Di2} we get that $c_n(1\otimes f)\in Im(R_n \otimes_R \Hom_R(F,R)\rightarrow R_n \otimes_R \Hom_R(G,R))$. This implies for each $n$, $c_n(1\otimes f)\in Im(A \otimes_R \Hom_R(F,R)\rightarrow A \otimes_R \Hom_R(G,R))$ via base change, since $A$ is also an $R_n$-algebra. From above, we get for each $n$, $v(c_n) < \epsilon_n$, and that the $\epsilon_n$ form a sequence of real numbers converging to zero. Thus, for all $\epsilon > 0$, there exists an $n$ such that $v(c_n) < \epsilon$ with $c_n(1\otimes f)\in Im(A \otimes_R \Hom_R(F,R)\rightarrow A \otimes_R \Hom_R(G,R))$. Hence from Definition 4, $f\in (Im(\nu^V))_{G^V}^{\bold{v}}$. From Proposition 2.2, we get that the almost Cohen-Macaulay algebra $S$ is a phantom extension of $R$ via closure operation defined in Definition 4. This finishes the proof of the theorem.
\end{proof}

\begin{cor}
Let $(R,\m)$ be a complete local domain of mixed characteristic $p> 0$. Let $S$ be an almost Cohen-Macaulay $R$-algebra which satisfies (*) condition, then $S$ can be modified into a balanced big Cohen-Macaulay module over $R$.
\end{cor}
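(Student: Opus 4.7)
The plan is to chain together the results already established in Sections 2 and 4 with the general module-modification machinery of Dietz \cite{Di2}; the argument is short and structural rather than computational.

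First, apply Theorem 4.1: since $S$ is almost Cohen-Macaulay and satisfies the condition ($*$), $S$ is a phantom extension of $R$ with respect to the closure operation $(-)^{\bold{v}}$ defined in Definition 2. In particular, the short exact sequence $0\to R\to S\to Q\to 0$ comes equipped with the phantom data (a free resolution of $Q$ and a comparison map $f$) whose cocycle lies in the appropriate $(-)^{\bold{v}}$-closure. Second, by the closing statement of Proposition 2.2 (equivalently, the `if' direction of Corollary 1), the operation $(-)^{\bold{v}}$ satisfies properties (i)--(vii), i.e.\ it is a big Dietz closure on $R$, since $S$ is an almost Cohen-Macaulay $R^{+}$-algebra containing $R^{+}$ as a subdomain.

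With these two ingredients in hand, the second step is to invoke the main result of \cite{Di2} (the theorem already used in the proof of Corollary 2 through Corollary 4.9 of that paper): over a complete local domain, any phantom extension of $R$ with respect to a closure operation satisfying the big axioms can be modified, through a sequence of module modifications, into a balanced big Cohen-Macaulay $R$-module. Applying this result to $S$, equipped with the phantom-extension structure produced by Theorem 4.1, yields the desired balanced big Cohen-Macaulay $R$-module obtained from $S$ by such a sequence of modifications.

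The main point to verify -- and essentially the only obstacle -- is that the hypotheses of Dietz's modification theorem are genuinely in force: completeness of $(R,\m)$, a closure operation on $R$-modules satisfying the big axioms, and a phantom extension of $R$ with respect to that closure. All three requirements are met, respectively, by the standing hypothesis on $R$, by Proposition 2.2, and by Theorem 4.1. The mixed characteristic hypothesis plays no role beyond ensuring we are in the setting of the previous section; no further computation beyond what is already contained in those results is required, and the corollary follows as an immediate recombination of the machinery assembled earlier.
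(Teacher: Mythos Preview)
Your proof is correct and follows essentially the same approach as the paper: the paper's own argument is the single sentence invoking Theorem~3.7 of \cite{Di2}, Proposition~2.2, and Theorem~4.1, exactly the three ingredients you assemble. The only minor imprecision is that the relevant result from \cite{Di2} is Theorem~3.7 there, not the one reached via Corollary~4.9 (the latter is specific to the $F$-finite characteristic-$p$ situation of Corollary~2).
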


\begin{proof}
Using Theorem 3.7 of \cite{Di2}, Proposition 2.2 and Theorem 4.2, the result follows.
\end{proof}

\begin{rem}
If there exists a $\psi\in \Hom_R(S,R)$ which sends $1\mapsto u$ for some unit $u\in R$, then we can take $c_i= u$ for every $i$, since $v(u)= 0$ and thus in this case $S$ satisfies (*). Now, for unit $u\in R$, $\psi(1)=u$ is equivalent to the situation that $\psi|R$ is an $R$-linear isomorphism, since any nonzero $R$-linear map from $R$ to $R$ is always injective.
\end{rem}

\begin{acknowledgement}
I would like thank Geoffrey D. Dietz for careful reading of the paper and for all invaluable comments and suggestions for improvement of the paper. I would like to thank the referee for careful reading of the paper and for all the rectifications and modifications in the results of the paper. I am also grateful to the referee for numerous suggestions regarding examples and constructions for the betterment of the paper. 
\end{acknowledgement}
%%%%%%%%%%%%%%%%%%%%%%%%%%%%%%%%%%%%%%%%%%%%%%%%%%%%%%%%%%%%%%%%%%%%%%%%%%%%%%%%%%%%%%%%%%%%%%%%%%%%%%%%%%%%%%%%%%%%%%%%%%%%%%%%%%%%%%%%
%%%%%%%%%%%%%%%%%%%%%%%%%%%%%%%%%%%%%%%%%%%%%%%%%%%%%%%%%%%%%%%%%%%%%%%%%%%%%%%%%%%%%%%%%%%%%%%%%%%%%%%%%%%%%%%%%%%%%%%%%%%%%%%%%%%%%%%%%%%%%%%%%%%%%%%%%%%%%%%%%%%%%%%%%%%%%%%%%%%%%%%%%%%%%%%%%%%%%%%%%%%%%%%%%%%%%%%%%%%%%%%%%%%%%%%%%%%%%%%%%%%%%%%%%%%%%%%%%%%%


\begin{thebibliography}{}
\bibitem[An]{An} Y. Andr$\acute{e}$: \emph{La conjecture du facteur direct}, Publ.math.IHES (2017). https://doi.org/10.1007/s10240-017-0097-9, arXiv:1609.00345.
\bibitem[AB]{AB} M. Asgharzadeh and R. Bhattacharyya, \emph{Applications of closure operations on big Cohen-Macaulay algebras}, J. Algebra Appl. 11 (2012), no. 4.
\bibitem[BH]{BH} W. Bruns and J. Herzog, \emph{Cohen-Macaulay rings}, Cambridge University Press {\bf{39}}, Cambridge, (1998).
\bibitem[Di1]{Di1} G. Dietz, \emph{Characterization of closure operation that induced big Cohen-Macaulay algebra}, Proc. AMS {\bf{138}} (2010), 3849-3862.
\bibitem[Di2]{Di2} G. Dietz, \emph{Axiomatic closure operations, Phantom extensions, and Solidity}, arXiv:1511.04286.
\bibitem[Ho1]{Hoc75a} M. Hochster: Big Cohen-Macaulay modules and algebras and embeddability in rings of Witt
vectors, 106-195. Queen's Papers on Pure and Applied Math., No. 42.
\bibitem[Ho2]{Hoc75b} M. Hochster: Topics in the homological theory of modules over commutative rings, Published
for the Conference Board of the Mathematical Sciences by the American Mathematical Society,
Providence, R.I., 1975, Expository lectures from the CBMS Regional Conference held at the
University of Nebraska, Lincoln, Neb., June 24-28, 1974, Conference Board of the Mathematical
Sciences Regional Conference Series in Mathematics, No. 24.
\bibitem[Ho3]{Ho}M. Hochster, \emph{Solid Closure} In: Commutative algebra: syzygies, multiplicities, and birational algebra, Contemp. Math., vol. 159, pp. 103-172.
\bibitem[HH1]{HH} M Hochster and C. Huneke, \emph{Tight closure and elements of small order in integral extensions*}, Journal of Pure and Applied Algebra 71 (1991) 233-247.
\bibitem[HH2]{HH2} M Hochster and C. Huneke, \emph{Infinite integral extensions an big Cohen-Macaulay algebras}, Annals of Math. 135 (1992), 53-89.
\bibitem[HH3]{HH3} M. Hochster and C. Huneke, \emph{Tight closure of parameter ideals and splitting in module finite extensions}, J. Algebraic Geom. 3 (1994), no. 4, 599-670
\bibitem[Ro]{Ro} P. Roberts, \emph{Fontaine rings and local cohomology}, Journal of Algebra 323 (2010), 2257-2269.
\bibitem[Sh]{Sh} K. Shimomoto,  \emph{Almost Cohen-Macaulay algebras in mixed characteristic via Fontaine rings}, Volume 55, Number 1 (2011), 107-125, Illinois J. Math.\textit{}
\end{thebibliography}
\end{document}